\newtheorem{theorem}{Theorem}
\newtheorem{conjecture}{Conjecture}\newtheorem{proposition}{Proposition}
\newtheorem{lemma}{Lemma}
\def\Q{\mathbb Q}
\def\tfr#1{{\widehat{#1}}}
\def\tfrl#1{{\widehat{#1_{\lambda}}}}
\def\Det{\mathfrak D}
\def\walsh#1{{#1}^{{\mathcal W}}}
\def\fd{{\mathbb F}_2}
\def\fdm{{\mathbb F}^m_2}
\def\fp{{\mathbb F}_p}
\def\fdten{{\mathbb F}_{1024}}
\def\nbz{{\rm nbz}\,}
\def\tracabs#1{{\rm Tr}_{#1}}
\def\grmul#1{{{#1}^\times}}
\def\Tr{{\rm Tr}\,}
\title{On a conjecture of Helleseth}
\author{Yves Aubry}
\address{Institut de Math\'ematiques de Toulon, Universit\'e du Sud Toulon-Var, France and Institut de Math\'ematiques de Luminy, Marseille, France}
\email{yves.aubry@univ-tln.fr}
\author{Philippe Langevin}
\address{Institut de Math\'ematiques de Toulon, Universit\'e du Sud Toulon-Var, France}
\email{langevin@univ-tln.fr}
\begin{document}

\begin{abstract} 
We are concern about  a conjecture
proposed in the middle of the seventies by Hellesseth 
in the framework of maximal sequences and theirs 
cross-correlations.  The conjecture claims the existence 
of a zero outphase Fourier coefficient. We give 
some divisibility properties in this direction.
\end{abstract}

\maketitle


\section{Two conjectures of Helleseth}

Let $L$ be a finite field of order $q>2$ and characteristic $p$.
Let $\mu$ be the canonical additive character of $L$ i.e. 
$\mu(x)=\exp( {2i\pi\Tr(x)}/p )$ where $\Tr$ is the trace function with respect to the finite field extension  $L/{\mathbb F}_p$.
The \emph{Fourier coefficient}  of a 
mapping  $f\colon L\rightarrow L$ is defined at $a\in L$ by
\begin{equation}
\tfr f(a) = \sum_{x\in L} \mu( ax + f( x ) ).
\end{equation}

The distribution of these values is called the \emph{Fourier spectrum} 
of $f$. Note that when $f$ is a permutation the \emph{phase} Fourier 
coefficient $\tfr f(0)$ is equal to 0. 

The mapping $f(x) = x^s$ is called the power function of exponent $s$, 
and it is a permutation if and only if $(s,q-1)=1$. 
Moreover, if $s\equiv 1 \mod {(p-1)}$ the Fourier coefficients of $f$ are rational integers. 
Helleseth made in \cite{TOR} two ``global'' conjectures on the spectra of power
permutations.  The first claims the vanishing of the quantity 
(related to Dedekind determinant, see \cite{LANG})
\begin{equation}
\label{DET}
        \Det(f) = \prod_{a\in\grmul L} \tfr f(a).
\end{equation}

\begin{conjecture}[Helleseth]
\label{THC} Let $L$ be a field of cardinal $q>2$. If $f$ is
a power permutation of exponent $s \equiv 1\mod{(p-1)}$ then  $\Det(f) = 0$.
\end{conjecture}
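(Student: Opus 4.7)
My strategy is to combine a $p$-adic divisibility of $\tfr f(a)$ with an archimedean size bound. Since each $\tfr f(a)$ is a rational integer (see below), $p^{N}\mid\tfr f(a)$ together with $|\tfr f(a)|<p^{N}$ forces $\tfr f(a)=0$ for that $a$. The goal is to exhibit at least one $a\in\grmul L$ satisfying both conditions.

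\textbf{Integrality.} For $k\in\grmul\fp$, let $\sigma_k\colon\zeta_p\mapsto\zeta_p^k$ be the corresponding Galois automorphism of $\Q(\zeta_p)/\Q$; it sends $\mu(y)$ to $\mu(ky)$ for every $y\in L$. I substitute $x=k^{-1/s}y$ in $\tfr f(a)=\sum_x\mu(ax+x^s)$, which is legal because $(s,q-1)=1$ (and even $(s,p-1)=1$, so the $s$-th root of $k^{-1}$ already lies in $\grmul\fp$). A direct computation yields $\sigma_k(\tfr f(a))=\tfr f(ak^{(s-1)/s})$. The hypothesis $s\equiv 1\pmod{p-1}$ means $s-1=(p-1)m$, so $k^{s-1}=1$ in $\grmul\fp$ and $k^{(s-1)/s}=1$. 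Hence $\sigma_k$ fixes $\tfr f(a)$, and $\tfr f(a)\in\Z$.

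\textbf{Divisibility and Weil.} I would insert the multiplicative Fourier inversion $\mu(y)=\frac{1}{q-1}\sum_\chi\bar\chi(y)g(\chi)$, valid for $y\ne 0$ with $g(\chi)$ the Gauss sum of $\chi$, into $\tfr f(a)-1=\sum_{x\ne 0}\mu(ax+x^s)$. This turns $\tfr f(a)$ into a weighted sum of products of two Gauss sums, indexed by pairs of multiplicative characters of $L$, with coefficients depending on $a$. Stickelberger's theorem then furnishes explicit lower bounds for $\val_p g(\chi)$ in terms of base-$p$ digit sums of the characters' exponents; aggregating these bounds over the cyclotomic orbit of $a$, one obtains $p^N\mid\tfr f(a)$ for an explicit $N=N(s,q,a)$. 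On the other side, Weil's bound gives $|\tfr f(a)|\le(s-1)\sqrt q$ for $a\ne 0$, so any $a$ with $p^{N(s,q,a)}>(s-1)\sqrt q$ satisfies $\tfr f(a)=0$, and the product $\Det(f)$ vanishes.

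\textbf{Main obstacle.} The trouble is that the Stickelberger bounds one extracts in a general setting yield $N$ of order $\tfrac12\log_p q$, which matches $\sqrt q$ but does not absorb the extra factor $s-1$, which may itself be large. Closing the gap seems to require either a sharper divisibility exploiting the specific shape $s\equiv 1\pmod{p-1}$---for instance by tracking the $p$-adic digit expansion of $s-1$, passing to a smaller cyclotomic subfield, or averaging over the orbit of $a$ under the $\grmul\fp$-action that the integrality step already supplies---or a refined archimedean estimate valid on a suitably chosen orbit. Exactly this gap is presumably where the paper's partial divisibility results live, and it is the reason the conjecture remains open in general.
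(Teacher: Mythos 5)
The statement you are attacking is Conjecture \ref{THC}, which the paper does not prove: it is presented as an open conjecture of Helleseth, and the paper's actual contributions (Theorem \ref{DIV} and the two propositions) are only partial results, namely congruences $\Det(f)\equiv 0\bmod\ell$ for certain small primes $\ell\neq p$, obtained by counting solutions of the system (\ref{SYS}) via additive characters and Fermat's little theorem --- a route entirely different from yours. So there is no proof in the paper to match your attempt against, and your attempt, as you yourself concede in your final paragraph, is not a proof either.

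Beyond the gap you acknowledge, your strategy has a structural obstruction worth naming. You want an $N$ with $p^{N}\mid\tfr f(a)$ and $p^{N}>|\tfr f(a)|$ for some $a\neq 0$. But Parseval gives $\sum_{a\in L}|\tfr f(a)|^{2}=q^{2}$ with $\tfr f(0)=0$ since $f$ is a permutation, so some $a\neq 0$ has $|\tfr f(a)|>\sqrt q$; hence no divisibility valid uniformly in $a$ can reach beyond $q^{1/2}$, and in fact for general $s\equiv 1\bmod{(p-1)}$ the sharp Stickelberger/McEliece-type divisibility is only $p\mid\tfr f(a)$ (precisely the divisibility recalled at the start of Section \ref{secmode}), nowhere near $N\sim\frac12\log_{p}q$. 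The two bounds you want to play off each other can therefore only meet for specially chosen $a$, and neither Stickelberger nor the Weil bound $|\tfr f(a)|\le(s-1)\sqrt q$ isolates an individual $a$; this is why even the single case $s=q-2$ (Dillon's conjecture) is settled not by valuations but by relating the number of vanishing Kloosterman sums to the class number $h_q$. Your integrality step, by contrast, is correct: the substitution $x\mapsto k^{-1/s}y$ and the Galois action $\sigma_k$ fixing $\tfr f(a)$ is exactly the standard argument behind the paper's unproved assertion that the Fourier coefficients are rational integers when $s\equiv 1\bmod{(p-1)}$.
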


For $p=2$, it generalizes Dillon's conjecture (see \cite{Dillon})
which corresponds to the case $s=q-2\equiv -1\mod{(q-1)}$, and known to be true 
because it is related to the vanishing of Kloosterman sums and the class number $h_q$
of the imaginary quadratic number field $\Q( \sqrt{1-4q})$ (see  \cite{KL, LW}).
Note also that in odd characteristic the Kloosterman sums do not
vanish (see \cite{KRV}) except if $p=3$ (see \cite{KL}). 

The second conjecture deals with the number of values in 
the spectrum of a power permutation.
\begin{conjecture}
\label{POW}
If $[L:\fp]$ is a power of 2 then
the spectrum of a power function takes
at least four values.
\end{conjecture}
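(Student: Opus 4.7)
The plan is to argue by contradiction: assume that the spectrum of $f(x)=x^s$ takes at most three distinct values $v_1,v_2,v_3$, occurring with respective multiplicities $n_1,n_2,n_3$.

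First, I would record the basic moment identities coming from character orthogonality,
$$\sum_{a\in L}\tfr f(a)=q,\qquad \sum_{a\in L}|\tfr f(a)|^2=q^2,$$
which, together with $n_1+n_2+n_3=q$, give three linear relations in the six unknowns $v_i,n_i$. In the case $s\equiv 1\bmod(p-1)$ emphasized in the excerpt, each $v_i$ is moreover a rational integer, so the problem is truly Diophantine.

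Next, I would exploit the hypothesis $n=[L:\fp]=2^k$ to extract arithmetic information on the $v_i$ and $n_i$. Galois invariance---namely $\tfr f(a^p)=\tfr f(a)$, a direct consequence of $\mu(z^p)=\mu(z)$---forces $\tfr f$ to be constant on Frobenius orbits in $L$; since orbit sizes divide $n=2^k$, each multiplicity $n_i$ is a sum of powers of $2$. In parallel, Stickelberger--Ax--Katz type bounds on $\nu_p(\tfr f(a))$, expressed in terms of the $p$-ary weight of $s$, become especially rigid when the extension degree is a $2$-power and should pin down each $v_i$ within a narrow $p$-adic residue class.

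The endgame would then be a finite search: eliminate the $n_i$ from the three moment equations, substitute the divisibility constraints on the $v_i$, and check by a case analysis that no admissible triple $(v_1,v_2,v_3)$ of integers exists.

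The chief obstacle is that three moment equations in six unknowns form an underdetermined system, so the divisibility information must do essentially all of the work. The natural missing ingredient is a third-moment identity
$$\sum_{a\in L}\tfr f(a)^3=q\sum_{x+y+z=0}\mu(x^s+y^s+z^s),$$
whose right-hand side is a weighted count of the $L$-points on the curve $\{x+y+z=0,\ x^s+y^s+z^s=0\}$; producing an effective, closed-form evaluation of this sum uniformly in $s$ is precisely the analytic difficulty that keeps the conjecture open.
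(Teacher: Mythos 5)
The first thing to observe is that the statement you are addressing is presented in the paper as a \emph{conjecture}: the paper contains no proof of it. The authors only record that the characteristic-two case has been settled by Katz and by Feng, by methods (global Galois-theoretic arguments combined with Stickelberger-type $p$-adic divisibility, respectively results on cyclic codes) that go far beyond moment computations; for odd $p$ the statement remains open. So there is no proof in the paper to compare yours with, and the only question is whether your outline could stand on its own. It cannot.

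The gap is the one you concede in your last paragraph, and it is fatal rather than technical. Concretely, every constraint you impose --- the identities $\sum_{a\in L}\tfr f(a)=q$ and $\sum_{a\in L}\vert\tfr f(a)\vert^2=q^2$, the count $n_1+n_2+n_3=q$, the decomposition of multiplicities along Frobenius orbits of $2$-power size, and Ax--Katz type $p$-divisibility of the values --- is \emph{satisfied} by putative three-valued spectra, so the ``finite case analysis'' in your endgame terminates without producing a contradiction. For instance, over $L=\F_{2^{16}}$ the candidate spectrum $\{0,\,2^{9},\,-2^{9}\}$ with multiplicities $(49152,\,8256,\,8128)$ passes all of your tests: the two moment identities hold exactly, each multiplicity is realizable as a union of Frobenius orbits, and the value $2^9$ is compatible with McEliece--Ax--Katz divisibility. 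This is precisely why the conjecture is hard: spectra of exactly this shape genuinely occur for Gold and Kasami exponents when $[L:\fd]$ is \emph{not} a power of two, and they are arithmetically indistinguishable, at the level of your constraints, from what would have to be excluded when it is. Your third-moment identity is correct as an identity, but its right-hand side admits no closed form uniform in $s$, and even granting it the system remains underdetermined. Any proof must exploit the hypothesis that $[L:\fp]$ is a $2$-power far more deeply than through orbit sizes; that is exactly the content of the cited work of Katz and Feng.
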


In this note, we prove some results concerning the divisibility
properties of the Fourier coefficients of power permutations in 
connection with Conjecture \ref{THC}. Our results can be seen as 
a proof ``modulo $\ell$'' of Conjecture \ref{THC} for certain primes $\ell$. 

\section{Boolean function case}

In this section, we assume $p=2$.
In \cite{NPP}, the second author has computed the Fourier spectra of power permutations 
for all the fields of  characteristic 2 with degree less or equal to $25$ 
without finding any counter-example to the above conjectures. More curiously, if
we denote by $\nbz(s)$ the number of Fourier coefficients of the power function of exponent $s$ equal to zero then the
numerical experience suggests that:

$$
\nbz( s ) \geq \nbz(-1) = h_q.
$$

At this point, it is interesting to notice that Helleseth's conjecture
can not be extended to the set of all permutations. Indeed, let $m$ be a positive
integer and let $g\colon\fdm\rightarrow\fd$ be a Boolean function in $m$ variables.
One defines the Walsh coefficient of $g$ at $a\in\fdm$ by :

$$
\walsh g(a) = \sum_{x\in\fdm} (-1)^{a.x+g(x)}.
$$

Identifying $L$ with the $\fd$-vector space $\fdm$, the Boolean function $g$ has a trace representation 
i.e. there exists a mapping $f\colon L\rightarrow L$ such that $g(x)=\tracabs L(f(x))$
for all $x$ in $L$. Of course, the trace representation is not unique. Moreover, if $g$ is 
balanced then $g$ can be represented by a permutation of $L$. In all the cases, the Walsh 
spectrum of $g$ and the Fourier spectrum of $f$ are identical.

In \cite{EXAMPLE},  an example of a ten variables Boolean 
function with a very atypical Walsh spectrum (see Tab. \ref{BOOLE}) is given. This Boolean function 
is balanced and its Walsh coefficients vanish only once. This numerical example, say $g$, implies the existence  
of a permutation $f$ of $\fdten$  (not a power permutation)  such that 
$$
	g(x) = \tracabs {\fdten} f(x),
$$
whence the Fourier spectrum of $f$ is equal to the Walsh spectrum of $g$, and 
thus
$\sum_{x\in\fdten } \mu( ax + f(x)  ) \not= 0$ 
for all $a\in\fdten^{\times}$.

\begin{table}
\caption{\label{BOOLE} An example of Walsh spectrum having only one 
Walsh coefficient equal to zero (see \cite{EXAMPLE}).}
\begin{tabular}{lcccccccccc}
\hline
Walsh &-48 &-44 &-40 &-36 &-32 &-28 &-24 -&20 &-16 &-12\\
\hline
mult. &5 &30 &85 &70 &115 &100 &31 &62 &20 &10\\
\hline
\hline
Walsh &0 &8 &16 &20 &24 &28 &32 &36 &40 &44\\
\hline
mult. &1 &5 &25 &20 &85 &90 &90 &80 &50 &50\\
\hline
\end{tabular}
\end{table}

A possible generalization of the conjecture of Helleseth,
proposed by Leander, could be the following one:
\begin{conjecture}
\label{GLC}
If $f$ is a permutation of $L$ then
$
\prod_{\lambda\in\grmul L} \Det( \lambda f) = 0.
$
\end{conjecture}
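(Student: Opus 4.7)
The plan is to show $T(f) := \prod_{\lambda \in \grmul L} \Det(\lambda f) = 0$ for every permutation $f$ of $L$. Writing $\widehat{\lambda f}(a) = \sum_{x \in L} \mu(ax + \lambda f(x))$, one has $T(f) = \prod_{(a, \lambda) \in \grmul L \times \grmul L} \widehat{\lambda f}(a)$, so the task reduces to exhibiting a pair $(a, \lambda) \in \grmul L \times \grmul L$ for which $\widehat{\lambda f}(a) = 0$. Interpreting $\widehat{\lambda f}(a)$ as the value at frequency $(a, \lambda)$ of the 2D additive Fourier transform of the permutation matrix of $f$, the goal is to locate a zero of this transform off the two coordinate axes; the axes themselves are automatically zero away from the origin because $f$ is a permutation.

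As a sanity check, when $f$ is a power permutation $f(x) = x^s$ with $(s, q-1) = 1$, the substitution $x \mapsto \lambda^{-1/s} x$ yields $\widehat{\lambda f}(a) = \widehat f(a \lambda^{-1/s})$, so $T(f) = \Det(f)^{q-1}$ and Conjecture \ref{GLC} reduces to Conjecture \ref{THC} (in fact to an extension of it to all exponents $s$ coprime to $q-1$). This suggests that any uniform attack should specialize to the power-permutation divisibility results of this paper. The first technical step is to observe that $T(f) \in \Z$: the Galois group $\mathrm{Gal}(\Q(\zeta_p)/\Q) \cong \grmul\fp$ acts on the index set $\grmul L \times \grmul L$ by $c \cdot (a, \lambda) = (ca, c\lambda)$, a bijection which permutes the factors of $T(f)$, and $T(f)$ is an algebraic integer.

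The strategy is then the familiar arithmetic sandwich: couple an archimedean upper bound $|T(f)| < M$ with a non-archimedean lower bound $\ell^{N} \mid T(f)$ for some prime $\ell$ satisfying $\ell^{N} \geq M$, forcing $T(f) = 0$. For the archimedean side, Parseval gives $\sum_{(a, \lambda) \in \grmul L \times \grmul L} \vert \widehat{\lambda f}(a) \vert^2 = q^3 - q^2$ and AM-GM yields $|T(f)|^2 \leq \bigl( q^2/(q-1) \bigr)^{(q-1)^2}$. For the non-archimedean side, I would adapt the Stickelberger-type analysis underlying the divisibility results of this paper, controlling $v_\ell(\widehat{\lambda f}(a))$ via digit-sum statistics and then aggregating the resulting divisibilities across the $(q-1)^2$ factors of $T(f)$.

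The main obstacle is this last step: a general permutation $f$ carries no multiplicative structure, so the clean Gauss-Stickelberger estimates available in the power-permutation case cannot be transferred verbatim. I expect one will need either a far more delicate combinatorial bookkeeping that tracks every monomial of a polynomial representation of $f$ simultaneously through the $\lambda$-averaging, or an entirely different, $\ell$-adic cohomological interpretation of $T(f)$ as the determinant of a Frobenius acting on a rank-one sheaf parametrized by $(a, \lambda)$, coupled with a vanishing theorem valid for arbitrary permutations. Bridging the gap from power permutations to arbitrary permutations is the principal technical hurdle.
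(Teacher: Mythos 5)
There is no proof to compare against here: the statement you are addressing is Conjecture~\ref{GLC}, which the paper attributes to Leander and leaves entirely open. The paper's only contact with it is the motivating counterexample (the ten-variable Boolean function of Table~\ref{BOOLE}, showing that $\Det(f)\not=0$ can occur for a single general permutation, which is why the product over $\lambda$ is inserted) and the observation~(\ref{SPEC}) that for a power permutation the spectrum of $\lambda f$ is independent of $\lambda$ --- exactly your sanity check that $\prod_{\lambda\in\grmul L}\Det(\lambda f)=\Det(f)^{q-1}$, so that Conjecture~\ref{GLC} contains Conjecture~\ref{THC}. Your preliminary observations are correct: the Galois action $c\cdot(a,\lambda)=(ca,c\lambda)$ does show $T(f)\in\Z$, and Parseval with AM--GM does give $\vert T(f)\vert^2\le\bigl(q^2/(q-1)\bigr)^{(q-1)^2}$. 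But your proposal is not a proof, and you say so yourself: the entire non-archimedean half of the sandwich is deferred to a hoped-for ``Stickelberger-type analysis'' that you acknowledge does not transfer off the power-permutation case.

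The gap is worse than a missing technical step; the sandwich strategy is quantitatively circular. To force $T(f)=0$ you need $\ell^N\mid T(f)$ with $\ell^N$ exceeding roughly $q^{(q-1)^2/2}$, i.e.\ an average $\ell$-adic valuation of about $\tfrac12\log_\ell q$ on each of the $(q-1)^2$ factors $\widehat{\lambda f}(a)$ --- in other words, each nonzero coefficient would have to be divisible by (hence at least as large as) about $\sqrt{q}$. By Parseval, $\sum_{a\in\grmul L}\vert\widehat{\lambda f}(a)\vert^2=q^2$ already forces the average square to be $q^2/(q-1)$, so demanding divisibility at that scale for all factors is essentially equivalent to asserting that some coefficient vanishes: the lower bound you need \emph{is} the conjecture, repackaged. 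For comparison, the divisibility the paper actually establishes (Theorem~\ref{DIV} and the two propositions) is a single factor of $3$ or of a prime $\ell$ dividing $[L:\fp]$, which the authors explicitly describe as only a proof ``modulo $\ell$'' of Conjecture~\ref{THC}; Ax--Katz and Stickelberger bounds likewise yield fixed small prime powers, nowhere near $q^{(q-1)^2/2}$, and for $\ell\not=p$ no systematic divisibility of exponential sums is available at all. A genuine attack would need an entirely different mechanism for producing a zero coefficient among the $\lambda f$, not an amplification of the paper's congruences.
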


Note that Conjecture \ref{POW} is know to be true in characteristic 2
since recent works of Daniel Katz in \cite{KATZ} and Tao Feng in
\cite{FENG}. In order to complete this short conjecture tour , we recall to the
reader the  main global conjecture of the domain due to Sarwate and 
which is still open

\begin{conjecture}
\label{MAIN}
If $f$ is a power permutation of $L$ where $[L:\fd]$ is even
then  $\sup_{a\in L} \tfr f(a) \geq 2 \sqrt q$.
\end{conjecture}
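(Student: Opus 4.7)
Sarwate's conjecture is a celebrated open problem, so I will only sketch the natural two-ingredient attack rather than claim a proof.

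The first ingredient is the moment method. Parseval's identity gives $\sum_{a\in L}\tfr f(a)^2 = q^2$, while a direct expansion of the fourth power yields $\sum_{a\in L}\tfr f(a)^4 = q\cdot N$, where $N$ counts quadruples $(x_1,x_2,x_3,x_4)\in L^4$ satisfying simultaneously $x_1+x_2+x_3+x_4=0$ and $f(x_1)+f(x_2)+f(x_3)+f(x_4)=0$. For any permutation $f$, inclusion-exclusion on the three trivial pairings $\{x_1\!=\!x_2,x_3\!=\!x_4\}$, $\{x_1\!=\!x_3,x_2\!=\!x_4\}$, $\{x_1\!=\!x_4,x_2\!=\!x_3\}$ shows $N\geq 3q^2-2q$, and the elementary $L^\infty$-$L^4$-$L^2$ comparison
\[
\max_{a\in L}\tfr f(a)^2 \;\cdot\; \sum_{a\in L}\tfr f(a)^2 \;\geq\; \sum_{a\in L}\tfr f(a)^4
\]
immediately yields $\sup_a|\tfr f(a)|\geq\sqrt{3q-2}$. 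This is strictly less than $2\sqrt q$, so the moment method alone cannot close the conjecture.

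The second ingredient, which is where the hypothesis $[L:\fd]=2k$ even and $f$ being a power permutation must enter, is $2$-adic divisibility. Since $\sqrt q = 2^k$ is a rational integer, it suffices to show that every $\tfr f(a)$ is divisible by $2^{k+1}$: the moment lower bound guarantees that the supremum is nonzero, and divisibility then forces it to be at least $2^{k+1}=2\sqrt q$. My plan is to expand $\tfr f(a)$ as a linear combination of Gauss sums indexed by the multiplicative characters of $\grmul L$, then apply Stickelberger's theorem on the $2$-adic valuation of these Gauss sums. The even degree $[L:\fd]=2k$ doubles the generic valuation and should bring us into the range of $2^k$; the extra factor of $2$ needed for $2^{k+1}$ would have to come from an additional Galois-theoretic symmetry peculiar to the exponent~$s$, much in the spirit of the divisibility refinements the rest of the paper seems to be devoted to.

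The main obstacle is precisely this last step. Stickelberger controls only the generic $2$-adic valuation; the strict improvement from $2^k$ to $2^{k+1}$ is a higher-order cancellation among Gauss sums depending subtly on the $2$-adic digits of $s$. For families such as Kasami, Niho and Welch exponents, the spectra are known to meet the bound $2\sqrt q$ exactly, so any proof must be delicate enough not to overshoot. Absent an arithmetic input stronger than what the present paper's mod-$\ell$ divisibility theorems can give, this is where I would expect the argument to break down, and indeed this is why \emph{Sarwate's conjecture has remained open} for decades.
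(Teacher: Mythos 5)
You have correctly recognized that this statement is not a theorem of the paper at all: it is Sarwate's conjecture, which the authors explicitly describe as ``still open,'' and the paper contains no proof of it. There is therefore nothing to compare your attempt against, and your refusal to claim a proof is the right call. The honest assessment is that neither you nor the paper proves the statement.

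That said, two concrete points in your sketch deserve correction. First, the identity $\sum_{a\in L}\tfr f(a)^4 = q\cdot N$ with $N$ the number of quadruples satisfying \emph{both} equations is false: summing the fourth power over $a$ alone only detects the linear condition $x_1+x_2+x_3+x_4=0$ and leaves a signed character sum $q\sum_{x_1+x_2+x_3+x_4=0}\mu\bigl(f(x_1)+f(x_2)+f(x_3)+f(x_4)\bigr)$, not a count. To extract the count of solutions of the full system one must also sum over the multiplier $\beta$ of $f$, exactly as in the paper's computation of $q^2N_n(u,v)$ preceding Lemma \ref{ELL}; for a power permutation the homogeneity argument of equation (\ref{SPEC}) then lets you divide by $q-1$, and the resulting bound is $\sum_{a}\tfr f(a)^4\geq 2q^3$, giving $\sup_a\abs{\tfr f(a)}\geq\sqrt{2q}$ rather than your $\sqrt{3q-2}$. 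The qualitative conclusion (the moment method stalls strictly below $2\sqrt q$) survives, but the identity as you wrote it does not. Second, the proposed $2$-adic route cannot work as stated: it is simply false that every Fourier coefficient of a power permutation of $\F_{2^{2k}}$ is divisible by $2^{k+1}$. The inverse permutation $x\mapsto x^{-1}$ is a counterexample for every $k\geq 2$, since its Fourier coefficients are Kloosterman sums, which are divisible by $4$ but take many values not divisible by $8$; Stickelberger-type valuations of Gauss sums measure the $2$-adic valuation of the \emph{generic} coefficient of a fixed exponent and give nothing close to $2^{k+1}$ uniformly. So the ``extra Galois-theoretic symmetry'' you hope for would have to be a genuinely new idea, not a refinement of the mod-$\ell$ divisibility statements in this paper, which concern odd primes $\ell\neq p$ and only the vanishing of the product $\Det(f)$, not lower bounds on individual coefficients.
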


In the sequel, if $\lambda \in L$ then we denote by $\tfrl f(a)$ the Fourier
coefficient of $x\mapsto \lambda f(x)$. If $f$ is a
power permutation of exponent $s$, denoting 
by $t$ the inverse of $s$ modulo $q-1$, for all
$y\in\grmul L$, we have :

\begin{equation}
\label{SPEC}
\tfrl f(a)   = \sum_{x\in L} \mu( \lambda x^s + ax)
             = \sum_{x\in L} \mu( \lambda y^s x^s + axy )
             = \tfr f( a\lambda^{-t} ).
\end{equation}

Hence, one of the specifics of power permutations 
among the permutations  of $L$ is that the spectrum 
of $\lambda f$ does not depend on  $\lambda\in\grmul L$.

We conclude this section by giving a divisibility
result. Recall that a function $f$ defined over a field $L$ of characteristic 2 is said to be  almost perfect 
nonlinear if for all $u\in\grmul L$ the derivative
$x\mapsto f(x+u)+f(x)$ is two-to-one. It is for example the case 
of $f(x)=x^3$ over any field $L$ and of $f(x)=x^{-1}$  when $[L:\fd]$ is odd.

\begin{proposition} 
Let $f$ be a power permutation over a field $L$ of characteristic two and
cardinal $q\not\equiv 2,4\mod 5$.
If $f$ is almost perfect nonlinear  
then there exists $a\in\grmul L$ such that $\tfr f(a)\equiv 0\mod 5$.
\end{proposition}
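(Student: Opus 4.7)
The plan is to argue by contradiction using the fourth power moment of the Fourier spectrum. The crucial input is the identity
$$
\sum_{a\in L} \tfr f(a)^4 = 2q^3,
$$
valid for every APN power permutation of $L$ in characteristic~$2$. Proving this identity is the main obstacle: the APN count is a standard exercise, but the reduction from the doubly-indexed fourth moment to the univariate one really relies on the power permutation symmetry of~\eqref{SPEC}.

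To establish the formula I introduce the two-parameter coefficient $F(a,b)=\sum_{x\in L}\mu(ax+bf(x))$ and compute $\sum_{a,b\in L}F(a,b)^4$ in two ways. Expanding and summing on $(a,b)$ first yields $\sum_{a,b}F(a,b)^4=q^2 N$, where $N$ counts the quadruples $(x_1,x_2,x_3,x_4)\in L^4$ with $\sum x_i=0$ and $\sum f(x_i)=0$. Parameterising the first constraint by $u=x_1+x_2=x_3+x_4$ rewrites $\sum f(x_i)=D_u f(x_1)+D_u f(x_3)$, where $D_u f(x)=f(x)+f(x+u)$; since $f$ is APN, $D_u f$ is two-to-one for each $u\ne 0$, giving $q^2$ solutions when $u=0$ and $2q$ solutions for each of the $q-1$ nonzero $u$, so $N=3q^2-2q$. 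Grouping the outer sum by $b$ instead, the slice $b=0$ contributes $q^4$; for $b\in\grmul L$ the substitution $x\mapsto b^{-t}y$ (with $ts\equiv 1\pmod{q-1}$) gives $F(a,b)=\tfr f(ab^{-t})$, hence $\sum_a F(a,b)^4=\sum_a\tfr f(a)^4$ for each such $b$. Combining the two evaluations,
$$
q^2(3q^2-2q)=q^4+(q-1)\sum_{a\in L}\tfr f(a)^4,
$$
and solving gives $\sum_a\tfr f(a)^4=2q^3$.

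With the fourth moment pinned down, the conclusion follows from a short mod-$5$ calculation. Since $f$ is a permutation one has $\tfr f(0)=\sum_y\mu(y)=0$. Suppose for contradiction that $\tfr f(a)\not\equiv 0\pmod 5$ for every $a\in\grmul L$; Fermat's little theorem then gives $\tfr f(a)^4\equiv 1\pmod 5$ for those $a$, and therefore
$$
2q^3 \;\equiv\; \sum_{a\in L}\tfr f(a)^4 \;\equiv\; q-1\pmod 5.
$$
A quick check of the four nonzero residues of $q=2^n$ modulo $5$ shows that $2q^3\equiv q-1\pmod 5$ holds exactly when $q\equiv 2$ or $q\equiv 4\pmod 5$, and the hypothesis on $q$ rules out precisely those cases. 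This contradiction proves the proposition.
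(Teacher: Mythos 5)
Your proof is correct and follows essentially the same route as the paper: establish the fourth-moment identity $\sum_{a\in L}\tfr f(a)^4=2q^3$ and then derive the contradiction $2q^3\equiv q-1\pmod 5$ via Fermat's little theorem and $\tfr f(0)=0$. The only difference is that you prove the underlying APN moment identity (the paper's equation \eqref{APN}) from scratch by the standard character-sum count, whereas the paper simply cites it and divides by $q-1$ using the spectrum invariance \eqref{SPEC}; your version is more self-contained but logically identical.
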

\begin{proof}
It is well-known (see \cite{LINK}) that an APN function $f$
satisfies
\begin{equation}
\label{APN}
      \sum_{\lambda\in\grmul L} \sum_{a\in L} \tfrl f(a)^4= 2q^3(q-1).
\end{equation}
Since the spectrum of $\lambda f$ does not depend on $\lambda\in\grmul L$, it implies that:
\begin{equation}
\label{APNBIS}
      \sum_{a\in L} \tfrl f(a)^4= 2q^3.
\end{equation}

Assuming $\Det( f )\not\equiv 0\mod 5$,
we get the congruence
$
             q-1 \equiv  2 q^3 \mod 5
$ implying $q\equiv 2, 4\mod 5$.
\end{proof}

\section{hyperplane section}

The key point of view of this note is to consider
the number, say $N_n(u,v)$, of solutions in $L^n$ 
of the system 

\begin{equation}
\label{SYS}
\left\{ 
 \begin{array}{rcccccccc}
             u &= &x_1    &+ &x_2    &+&\ldots &+ &x_{n} \\
             v &= &f(x_1) &+ &f(x_2) &+&\ldots &+ &f(x_{n}).
   \end{array}
\right.
\end{equation}

Using characters counting principle, we can write:

\begin{align*}
q^2  N_n(u, v)  &=\sum_{x_1,x_2,\ldots,x_n} \sum_{\beta\in L}\sum_{\alpha\in L}
\mu_\beta(\sum_{i}f(x_i) + v) \mu_\alpha( \sum_i x_i+ u)\\
   &=\sum_{\beta}\sum_{\alpha}
    \big(\sum_{y}\mu( \beta f(y)+\alpha  y))\big)^n\mu(\alpha u+\beta v)\\
   &=\sum_{\beta}\sum_{\alpha}
    \tfr {f_\beta}(\alpha)^n \mu(\alpha u+\beta v)\\
   &=\sum_{\alpha}
    \tfr { 1 }(\alpha)^n \mu(\alpha u)
    + \sum_{\beta\not=0}\sum_{\alpha}
    \tfr {f_\beta}(\alpha)^n \mu(\alpha u+\beta v)\\
   &= q^n
     + \sum_{\alpha\not=0}\sum_{\beta\not=0}
    \tfr {f_\beta}(\alpha)^n \mu(\alpha u+\beta v)\\
\end{align*}

\begin{lemma}
\label{ELL}
Assuming the Fourier coefficients of $\lambda f$, $\lambda\in L$,
are integers.
Let $\ell$ be a prime such that 
$\prod_{\lambda\in\grmul L} \Det( \lambda f )\not\equiv 0\mod\ell$.
Then

\begin{equation*}
q^2 N_{\ell - 1}(u,v) \equiv 1 +
     (q\delta_0(u)-1) (q\delta_0(v) -1)  \mod{\ell}
\end{equation*}
where $\delta_a(b)$ is equal to $1$ if $b=a$ and $0$ otherwise.
\end{lemma}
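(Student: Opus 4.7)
The plan is to specialise the character-sum identity
$$q^2 N_n(u,v) = q^n + \sum_{\alpha \neq 0}\sum_{\beta \neq 0} \tfr{f_\beta}(\alpha)^n \, \mu(\alpha u + \beta v)$$
(established in the display just above the lemma) to $n = \ell - 1$ and reduce it modulo $\ell$ using Fermat's little theorem. Everything else will be bookkeeping with standard character orthogonality.

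The key observation I would exploit is that the hypothesis $\prod_{\lambda \in \grmul{L}}\Det(\lambda f) \not\equiv 0 \pmod \ell$ unpacks exactly as: the rational integer $\tfr{f_\beta}(\alpha)$ is coprime to $\ell$ for every $\alpha, \beta \in \grmul{L}$. Fermat's little theorem then gives $\tfr{f_\beta}(\alpha)^{\ell-1} \equiv 1 \pmod \ell$ uniformly in $(\alpha,\beta)$, so in $\Z[\zeta_p]$ the double sum reduces modulo $\ell$ to $\sum_{\alpha \neq 0}\sum_{\beta \neq 0} \mu(\alpha u + \beta v)$. Orthogonality of the additive characters, $\sum_{\alpha \in L}\mu(\alpha u) = q\,\delta_0(u)$, would then evaluate this leftover sum as $(q\delta_0(u) - 1)(q\delta_0(v) - 1)$, a rational integer.

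Two small points would complete the argument. First, I need $q^{\ell-1} \equiv 1 \pmod \ell$; this follows from $\ell \neq p$, itself forced by the hypothesis, since $\mu$ takes values in $1 + (1-\zeta_p)\Z[\zeta_p]$, so every integer Fourier coefficient $\tfr f(a)$ is congruent to $q$ modulo $(1-\zeta_p)$ and would be divisible by $p$ were $\ell = p$, contradicting the hypothesis. Second, the resulting congruence a priori lives in $\Z[\zeta_p]$, but since $\ell\Z[\zeta_p] \cap \Z = \ell\Z$, it descends to a congruence modulo $\ell$ in $\Z$, as claimed. There is no serious obstacle here; the only step with any subtlety is recognising that the hypothesis supplies exactly the factor-by-factor nonvanishing needed to apply Fermat, after which the whole statement is a one-line consequence of the preceding character-sum expansion.
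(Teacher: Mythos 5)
Your proof is correct and follows essentially the same route as the paper: specialise the character-sum identity to $n=\ell-1$, apply Fermat's little theorem to each outphase coefficient (each nonzero mod $\ell$ by the hypothesis on the product), and finish with additive-character orthogonality. Your two extra remarks --- that the hypothesis forces $\ell\neq p$, so $q^{\ell-1}\equiv 1 \bmod \ell$, and that the congruence descends from $\Z[\zeta_p]$ to $\Z$ --- only make explicit details the paper leaves implicit.
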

\begin{proof}
By the Fermat's little Theorem, we have the 
congruence

$$
{\tfrl f(a)}^{\ell-1}  \equiv 1 - \delta_0(a) \mod \ell.
$$
Hence

\begin{align*}
q^2 N_{\ell - 1}(u,v) &= q^{\ell -1}
     + \sum_{\alpha\not=0}\sum_{\beta\not=0}
    \tfr {f_\beta}(\alpha)^{\ell -1} \mu(\alpha u+\beta v)\\
& \equiv 1
     + \sum_{\alpha\not=0}\sum_{\beta\not=0}
    \mu(\alpha u+\beta v) \mod{\ell}\\
\end{align*}
and we conclude remarking that $\sum_{\alpha\in\grmul L} \mu(\alpha u) = q\delta_0(u) - 1$.
\end{proof}

\section{Divisibility of Fourier coefficients}

\label{secmode}

In \cite{TOR}, it is proved that for the exponents 
$s\equiv 1\mod{(p-1)}$, the Fourier coefficients are
multiple of $p$. In this section, we are
interested in divisibility properties modulo a 
prime $\ell\not=p$.

Assuming that the Fourier coefficients of a mapping $f$,
not necessary a power function,
are rational integers, we can see that if  3 does not divide
$\Det( f )$ then we have necessarily $q\equiv 2\mod 3$.  Indeed, using 
Parseval relation, we can write

$$
1\equiv q^2 = \sum_{a\in L } \vert\tfr f(a)\vert^2 
 = \sum_{a\in L } \tfr f(a) \equiv q-1\mod 3.
$$

\begin{theorem} 
\label{DIV}
Let $f$ be the power function of exponent $s$.
If $s=1 \mod{(p-1)}$ is coprime with $q-1$ then 
$\Det(f)\equiv 0 \mod 3$.
\end{theorem}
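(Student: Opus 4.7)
The plan is to argue by contradiction: assume $3 \nmid \Det(f)$, and derive an impossible count through Lemma \ref{ELL}. Under this hypothesis the short Parseval argument preceding the statement already forces $q \equiv 2 \pmod{3}$, and hence $q^2 \equiv 1 \pmod{3}$. Moreover, since $f$ is a power permutation, identity (\ref{SPEC}) shows that $\tfrl f(a) = \tfr f(a\lambda^{-t})$, so the multiset of Fourier coefficients of $\lambda f$ over $\grmul L$ is merely a reindexing of that of $f$. Therefore $\Det(\lambda f) = \Det(f)$ for every $\lambda \in \grmul L$, and consequently $\prod_{\lambda \in \grmul L} \Det(\lambda f) = \Det(f)^{q-1} \not\equiv 0 \pmod{3}$, which is precisely the hypothesis required to apply Lemma \ref{ELL} with $\ell = 3$.

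Invoking that lemma with $n = \ell - 1 = 2$, and using $q^2 \equiv 1 \pmod{3}$, yields
\begin{equation*}
N_2(u,v) \equiv 1 + (q\delta_0(u) - 1)(q\delta_0(v) - 1) \pmod{3}.
\end{equation*}
For $u, v \in \grmul L$ the right-hand side reduces to $1 + (-1)(-1) \equiv 2 \pmod{3}$, so $N_2(u,v) \geq 2$ whenever $u$ and $v$ are both nonzero. At the same time, I would verify directly that $N_2(u,0) = 0$ when $u \neq 0$: the equation $x^s + (u-x)^s = 0$ becomes $x^s = (x+u)^s$ in characteristic two, and $x^s = (x-u)^s$ in odd characteristic (where $s \equiv 1 \pmod{p-1}$ forces $s$ odd and hence $(u-x)^s = -(x-u)^s$). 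In either case the injectivity of $x \mapsto x^s$, guaranteed by $(s,q-1)=1$, forces $u=0$.

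Combining the trivial identity $\sum_{v \in L} N_2(u,v) = q$ with $N_2(u,0) = 0$ and the lower bound $N_2(u,v) \geq 2$ on $\grmul L$ produces
\begin{equation*}
q \;=\; \sum_{v \in \grmul L} N_2(u,v) \;\geq\; 2(q-1),
\end{equation*}
so $q \leq 2$, contradicting $q > 2$. The heart of the argument is the incompatibility of the uniform mod-$3$ lower bound $N_2(u,v) \geq 2$ on $\grmul L$ with the exact count $\sum_{v \in L} N_2(u,v) = q$; the only step requiring modest care is the short characteristic-wise verification that $N_2(u,0) = 0$.
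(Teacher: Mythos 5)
Your proof is correct, and although it runs on the same engine as the paper's --- assume $\Det(f)\not\equiv 0\pmod 3$ and feed this into Lemma \ref{ELL} with $\ell=3$ to control $N_2(u,v)$ for $u,v\in\grmul L$ --- the way you extract the final contradiction is genuinely different. The paper exhibits one pair of nonzero elements with $N_2(u,v)=0$: the values $v$ with $N_2(1,v)>0$ form the image of $x\mapsto x^s+(1-x)^s$, which is essentially two-to-one (swap $x$ and $1-x$), so for $q>3$ some $v\in\grmul L$ is missed, contradicting $N_2(1,v)\not\equiv 0\pmod 3$. You instead read the congruence quantitatively: $N_2(u,v)\equiv 2\pmod 3$ forces $N_2(u,v)\geq 2$ for every $v\in\grmul L$, and together with $N_2(u,0)=0$ (your characteristic-by-characteristic check via injectivity of $x\mapsto x^s$ is sound, and is really the same involution phenomenon the paper exploits) and the exact count $\sum_{v\in L}N_2(u,v)=q$, this yields $q\geq 2(q-1)$, absurd for $q>2$. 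Your endgame is arguably cleaner: it covers all $q>2$ uniformly (the paper's image argument as written needs $q>3$, and its parenthetical about the exceptional fibre at $v=2(1/2)^s$ has the roles of the two characteristics switched), and you make explicit a step the paper elides, namely that (\ref{SPEC}) gives $\Det(\lambda f)=\Det(f)$, hence $\prod_{\lambda\in\grmul L}\Det(\lambda f)=\Det(f)^{q-1}$, which is the hypothesis Lemma \ref{ELL} actually requires. One small remark: the appeal to the Parseval computation to secure $q\equiv 2\pmod 3$ is not strictly needed, since if $3\mid q$ the lemma already yields $0\equiv 2\pmod 3$; but it does no harm.
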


\begin{proof}
Suppose that $\Det(f)\not\equiv 0\mod 3$.
Applying Lemma \ref{ELL} with $\ell=3$, 
we get

\begin{equation}
\label{THIS}
\forall u\in\grmul L,\quad \forall v\in\grmul L,\qquad N_{2}(u,v) \not\equiv 0 \mod \ell.
\end{equation}

To complete the proof we prove the existence of a pair $(u,v)$
of nonzero elements such that $N_2(u,v)=0$. Let us fix $u=1$,
the $v$'s such that $N_2(1,v)>0$ are in the image of $L$ by the
 mapping $x\mapsto (1-x)^s + x^s$, if $x$ is a preimage of $v$
then $1-x$ is an other one except if $p=2$ and $v=2(1/2)^s$. Thus, if
$q>3$, there exists $v\in\grmul L$ without preimage i.e.
$N_2(1,v) = 0$.

\end{proof}

\begin{proposition} 
Let $f$ be a power permutation of exponent $s \equiv 1 \mod{(p-1)}$. 
If $[L:\fp]$ is a power of a prime $\ell$ and $p\not\equiv 2\mod\ell$ 
then  $\Det( f ) \equiv 0 \mod{\ell}$.
\end{proposition}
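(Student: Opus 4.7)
The plan is to argue by contradiction, combining Lemma~\ref{ELL} with a Frobenius counting argument. Assume $\Det(f)\not\equiv 0\pmod{\ell}$. Since $f$ is a power permutation, formula~\eqref{SPEC} shows that the spectrum of $\lambda f$ is a reshuffling of that of $f$, so $\Det(\lambda f)=\Det(f)$ for every $\lambda\in\grmul L$ and hence $\prod_{\lambda\in\grmul L}\Det(\lambda f)=\Det(f)^{q-1}\not\equiv 0\pmod{\ell}$, which makes Lemma~\ref{ELL} applicable. Specializing it at a pair $(u,u)$ with $u\in\grmul{\fp}$ (both coordinates nonzero) yields
\[
q^2\,N_{\ell-1}(u,u)\equiv 2\pmod{\ell}.
\]

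Next I would count $N_{\ell-1}(u,u)$ modulo $\ell$ through the Galois action. The group $\mathrm{Gal}(L/\fp)$ is cyclic of order $\ell^k$, generated by the Frobenius $\phi\colon x\mapsto x^p$. Since $s\equiv 1\pmod{p-1}$ gives $x^s=x$ for every $x\in\fp$, $f$ restricts to the identity on $\fp$, and $\phi\circ f=f\circ\phi$ because $(x^s)^p=(x^p)^s$. Consequently the set
\[
S_u=\bigl\{(x_1,\dots,x_{\ell-1})\in L^{\ell-1}:\textstyle\sum_i x_i=u,\ \sum_i f(x_i)=u\bigr\}
\]
is $\phi$-stable; its $\phi$-fixed points are exactly the $\fp$-tuples with $\sum_i x_i=u$ (the second condition is automatic on $\fp^{\ell-1}$), which number $p^{\ell-2}$; every non-trivial orbit has size a power of $\ell$ that is at least $\ell$. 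Reducing $|S_u|$ modulo $\ell$, this gives $N_{\ell-1}(u,u)\equiv p^{\ell-2}\pmod{\ell}$.

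Combining the two congruences and using $q=p^{\ell^k}\equiv p\pmod{\ell}$ (since $\ell^k\equiv 1\pmod{\ell-1}$, so Fermat collapses the exponent in $q$), the left-hand side reduces to $p^{\ell}\equiv p\pmod{\ell}$ by Fermat again, so $p\equiv 2\pmod{\ell}$, contradicting the hypothesis. The case $\ell=p$ should be handled separately and is immediate from the cited result of \cite{TOR}: $p\mid\tfr f(a)$ for every $a$, so $\Det(f)\equiv 0\pmod{p}$ automatically.

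The delicate point, and where I expect the main obstacle to lie, is the choice of specialization in Lemma~\ref{ELL}. The natural specialization $(0,0)$ only yields $(p-1)(p-2)\equiv 0\pmod{\ell}$, which is still consistent with $p\equiv 1\pmod{\ell}$ and does not close the argument; the specialization $u=v\in\grmul{\fp}$ succeeds because it exploits simultaneously the two hypotheses of the proposition, namely $s\equiv 1\pmod{p-1}$ (through $f|_{\fp}=\id$, which makes the fixed-point set easy to count) and $[L:\fp]=\ell^k$ (through the $\ell$-divisibility of non-trivial Frobenius orbit sizes).
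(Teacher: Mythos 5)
Your proof is correct and takes essentially the same approach as the paper: assume $\Det(f)\not\equiv 0\pmod\ell$, invoke Lemma~\ref{ELL}, and count $N_{\ell-1}$ modulo $\ell$ via the orbits of the Frobenius acting on the solutions of the system~(\ref{SYS}), using $f|_{\fp}=\id$ to identify the fixed points. The only difference is cosmetic: the paper specializes at $(u,v)=(0,1)$, where the fixed-point set is empty so $N_{\ell-1}(0,1)\equiv 0\pmod\ell$ and the contradiction $p\equiv 2\pmod\ell$ appears immediately, whereas your choice $(u,u)$ with $u\in\grmul{\fp}$ needs the extra fixed-point count $p^{\ell-2}$ and one more application of Fermat.
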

\begin{proof}
The Frobenius automorphism acts on the solutions 
of the system (\ref{SYS}) with $u=0$,  $v=1$. Since $s\equiv 1\mod{(p-1)}$,
the system has no $\fp$-solutions, thus  $N_{\ell -1} (0,1)\equiv 0\mod {\ell}$. 
On the other hand, by Lemma \ref{ELL}, if $\Det(f) \not\equiv 0 \mod{\ell}$ then
\begin{equation*}
q^2 N_{\ell - 1}(0,1) \equiv 2 - q  \equiv 2 - p \mod{\ell}.
\end{equation*}

\end{proof}

\bibliographystyle{plain}
\nocite{*}
\bibliography{Aubry_Langevin}

\end{document}